\numberwithin{equation}{section}
\newtheorem{thm}{Theorem}[section]
\newtheorem{lem}[thm]{Lemma}
\theoremstyle{definition}
\newtheorem{defn}[thm]{Definition}
\theoremstyle{remark}
\newtheorem{rem}[thm]{Remark}
\begin{document}

%\section{Groups} \label{groups}

\centerline{\bf Balanced finite presentations of the trivial group.}

\centerline{\bf Boris Lishak}
\centerline{\bf Department of Mathematics, University of Toronto}
\centerline{\bf 40 St. George st., Toronto, ON, M5S 2E4, CANADA;}
\centerline{\bf bors@math.toronto.edu}
\vskip 1truecm

{\bf Abstract.} We construct a sequence of balanced finite presentations
of the trivial group with two generators and two relators with the following property: The minimal number of relations required to demonstrate that
a generator represents the trivial element grows faster than the tower
of exponentials of any fixed height of the length of the finite presentation.

\section{Introduction} \label{intro}

The purpose of this paper is to construct a sequence of balanced
finite presentations of the trivial group with two generators
and two relators, one of which is the same for all finite presentations
in the sequence, with the following property: the minimal number of 
Tietze transformations required to bring these finite presentations
to the empty presentation of the trivial group grows faster
than the tower of exponetials of any fixed height of the length of
the variable relation (or, equivalently, the length of these finite
presentations). The minimal area of the van Kampen diagram required
to demonstrate that either of the generators is trivial also grows
faster than the tower of exponentials of any fixed height. 

Balanced finite presentations can be realized as ``obvious" finite
presentations of the (trivial) fundamental group of $4$-dimensional
spheres and discs. This fact leads to numerous geometric applications
of our results that will be discussed in my joint paper
with A. Nabutovsky \cite{LishNab}.
However, they also provide the following group-theoretic implications.

Recall, the Magnus problem (cf. \cite{nb}) asks whether or not the triviality
problem for balanced group presentrations is algorithmically solvable.
Equivalently, it asks whether or not the size of van Kampen diagrams
required to show that all generators in a balanced finite presentation
of the trivial group are trivial cannot be majorized by any
computable function of the complexity of the finite presentation.
Although we are not able to prove that every computable function
can serve as a lower bound for the size of such van Kampen
diagrams for all sufficientlty large values of the
complexity of the finite presentations, we establish that it already grows
very fast for balanced finite presentations with two generators
and two relators.

Another implication of this work is that one can have balanced finite
presentations of the trivial group satisfying the (balanced) Andrews-Curtis
conjecture where one needs an enormous number of elementary Tietze
moves to transform a given finite presentation to the trivial
finite presentation of the trivial group. This fact is relevant in light
of recent work (\cite{Mias}, \cite{HR}), where it was verified by means of explicit
computations that certain
specific balanced finite presentations of the trivial group cannot be
transformed into the trivial one by means of not too many elementary
Tietze operations. Our result casts some doubt on whether these results can be
considered as a strong empirical evidence that these
balanced finite presentations are counterexamples to
the Andrews-Curtis conjecture. Also, they can potentially help to exclude approaches to proving this conjecture that would result in estimates that are
not very rapidly growing.

In order to prove our results we start from the Baumslag-Gersten group,
that has Dehn function that is not bounded by any tower of exponents of a finite
length (\cite{Gers1}, \cite{Gers2}, \cite{Plat}). This group has finite
presentation $<x, y| x^{x^t}=x^2>$, and there exist trivial words $w_n$ 
of length $O(2^n)$  such that the area of any van Kampen diagram
grows as a tower of exponents of height $n$. The proof
of a lower bound for the Dehn function given by Gersten uses the
fact that this group can be obtained from
a cyclic group by a sequence of two HNN-extension and, therefore,
is aspherical. Therefore, the universal covering of its representation
complex is contractible, has trivial second homology group,
and each filling of each trivial word is unique on $2$-chain
level (see survey papers \cite{Sapir}, \cite{Bridson1}, \cite{Short} for discussions of the Dehn functions and alternative proofs for the lower bounds). A natural idea 
is to kill this group using new relation $w_n=t$. However,
as the resulting group is trivial, we  cannot use covering spaces of the realization
complex, and direct combinatorial proofs of desired lower bound
for the areas of van Kampen diagrams for generators of these
finite presentations seem elusive.

I learned about this problem from my Ph.D. advisor Alexander Nabutovsky.
He usuccessfully tried to prove that when one kills the Baumslag-Gersten group by adding an extra  relation such as $t=w_n$, one obtains a sequence of desired ``complicated" balanced finite presentations of the trivial group. He discussed this problem with several mathematicians in 1994-2000 but no solution was found.

After the first version of this paper was posted on arXiv I recieved an e-mail from Martin Bridson. He informed me that he had figured out how to use the Baumslag-Gersten group to construct a sequence of balanced presentations of the trivial group that require huge numbers of Andrews-Curtis moves to trivialize. He announced this result in 2003 at the Spring Lecture Series in Arkansas and in several other talks. In his talks he also observed that the existence of such examples make computer experiments suggesting that certain presentations are potential counterexamples to the Andrews-Curtis conjecure not convincing. This result was mentioned in his ICM-2006 talk (\cite{Bridson2}, page 977). However, he did not publish or post on the Internet any detais of his construction, and therefore I was not aware of his work during the time that I worked on this paper. He e-mailed me a preprint with his results (\cite{Bridson3}), after I posted my paper, and this has now appeared on the arXiv. His constructions and methods are different from ours and, in particular, they do not produce examples of rank $2$.

Our approach is to kill the Baumslag-Gersten group using a longer
(variable) second relator, so as to be able to
use a version of the small
cancellation theory to prove the desired property of van Kampen diagrams.
We use a combination of the small cancellation theory for HNN-extension deloped by  G. Sacerdote and P. Schupp  in \cite {SacerSch} (see also \cite{LS}) and ideas of A. Olshanskii related with the concept
of ``contiguity subdiagrams" (\cite{Olsh1},\cite{Olsh2}). Finally, we modify this theory
to introduce a concept of equivalence between words based not
on their equality in a group but its quantitative version, namely, the equality that can be established by means of a van Kampen diagram of area that does not exceed
a specified number $N$.

\section{Main Results} \label{main}

In this section we construct a sequence of balanced group presentations of the trivial group with the following properties.
Loosely speaking, the presentations are simple but to transform them to the empty presentation requires to go through increasingly more complex presentations.
To make this notion precise we use elementary Tietze transformations (cf. \cite{BHP}). We denote a presentation $\mu$ by $\mu = ( \{x_1,...,x_r\} , \{a_1,..., a_p \} )$, where $x_1,...,x_r$ are letters, $a_1,...,a_p$ are words in $x_j^{\pm 1}$, and the group presented by $\mu$ is $<x_1,...,x_r|a_1=1,..., a_p=1>$.

\begin{defn}
Elementary Tietze transformations:\\

${Op}_1$: The presentation $\mu = ( \{x_1,...,x_r\} , \{a_1,...,a_{i-1}, a_i \equiv a'a'', a_{i+1}, ..., a_p \} )$, where $a_1,...,a_p$ are words in the $x_j^{\pm 1}$, is replaced by the presentation $\mu_1 = ( \{x_1,...,x_r\} ,$  $ \{a_1,...,a_{i-1}, a' x_j^{\epsilon} x_j^{-\epsilon} a'', a_{i+1}, ..., a_p \} )$ ($\epsilon = \pm 1$).\\

${Op}_1^{-1}$: The inverse of ${Op}_1$ - it deletes $ x_j^{\epsilon} x_j^{-\epsilon}$in one of the relators.\\

${Op}_2$: The presentation $\mu = ( \{x_1,...,x_r\} , \{a_1,...,a_{i-1}, a_i, a_{i+1}, ..., a_p \} )$ is replaced by the presentation $\mu_1 = ( \{x_1,...,x_r\} , \{a_1,...,a_{i-1}, a'_i, a_{i+1}, ..., a_p \} )$, where the word $a'_i$ is a cyclic permutation of the word $a_i$.\\

${Op}_3$: The presentation $\mu = ( \{x_1,...,x_r\} , \{a_1,...,a_{i-1}, a_i, a_{i+1}, ..., a_p \} )$ is replaced by the presentation $\mu_1 = ( \{x_1,...,x_r\} , \{a_1,...,a_{i-1}, a_i^{-1}, a_{i+1}, ..., a_p \} )$.\\

${Op}_4$: The presentation $\mu = ( \{x_1,...,x_r\} , \{a_1,...,a_{i-1}, a_i, a_{i+1}, ..., a_p \} )$ is replaced by the presentation $\mu_1 = ( \{x_1,...,x_r\} , \{a_1,...,a_{i-1}, a_ia_j, a_{i+1}, ..., a_p \} )$, where $i \neq j$.\\

${Op}_5$: The presentation $\mu = ( \{x_1,...,x_r\} , \{a_1, ..., a_p \} )$ is replaced by the presentation $\mu_1 = ( \{x_1,...,x_r, x_{r+1}\} , \{a_1, ..., a_p, x_{r+1} \} )$ .\\

${Op}_5^{-1}$: The inverse of ${Op}_5$.

\end{defn}

It is well-known (and can be found, for example, in \cite{BHP}) that any presentation of the trivial group could be reduced to the empty presentation by a sequence of Tietze transformations if we can also add empty relators to the presentation. The stable Andrews-Curtis conjecture says that we do not need to add empty relators for balanced presentations. Note that in \cite{BHP} slightly different ${Op}_5^{\pm 1}$ are used. There one can add a relator $x_{r+1} a $, where $a$ is a word in the letters $x_1^{\pm 1},..., x_r^{\pm 1}$ . But that is not necessary for presentaions of the trivial group.\\
\\

We introduce some notations. Denote by $E_n$ the tower of exponents of height $n$, i.e. $E_i$ are recursively defined by $E_0 = 1$, $E_{n+1} = 2^{E_n}$. As usual, $x^y$ denotes $y^{-1}xy$, where $x, y$ can be words or group elements. Let $l(w)$ be the length of the word $w$. If $w$ represents the identity element, denote by $Area_{\mu}(w)$ the minimal number of 2-cells in a van Kampen diagram over the presentation $\mu$ with boundary cycle labeled by $w$. Now we can state the main theorem:

\begin{thm} \label{main}
There exist presentations of the trivial group $\mu_i = $ \\ $ (\{x,y,t\},\{x^{y}x^{-2}, x^ty^{-1}, a_i \})$ for $i \in \{5,6,7,... \}$, where $l(a_i) < 100 \cdot 2^{i}$, but the minimal number of elementary Tietze transformations required to bring $\mu_i$ to the empty presentation is at least $E_{i-2}$.
\end{thm}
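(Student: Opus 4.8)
The plan is to establish a lower bound on the number of Tietze transformations by showing that any short sequence of moves would produce an intermediate presentation whose relators encode a small van Kampen diagram certifying that the relevant generator is trivial in the Baumslag-Gersten group, contradicting Gersten's tower-of-exponentials lower bound on its Dehn function. First I would fix $a_i$ to be a relator of the form $t w_i^{-1}$ (or a carefully chosen longer variant built from the trivial word $w_n$ of length $O(2^n)$ in the Baumslag-Gersten group), so that $\mu_i$ presents the trivial group: the relations $x^y = x^2$ and $x^t = y$ say that the first two generators realize the Baumslag-Gersten group with $y$ eliminated, and the third relation kills $t$ by setting it equal to the trivial word $w_i$, whence everything collapses. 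The length bound $l(a_i) < 100\cdot 2^i$ is then immediate from $l(w_i) = O(2^i)$.

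The heart of the argument is to convert a hypothetical short Tietze sequence into an area bound. The key idea is that each elementary Tietze transformation changes the total length of the presentation by a bounded amount, so a sequence of $L$ moves starting from $\mu_i$ stays within presentations of total relator length $O(L + 2^i)$. I would then use the small-cancellation machinery developed earlier in the paper — the Sacerdote--Schupp theory for HNN-extensions combined with Olshanskii-style contiguity subdiagrams, and crucially the quantitative notion of $N$-bounded equivalence — to argue that a van Kampen diagram demonstrating triviality, or equivalently a Tietze reduction to the empty presentation, of area at most $N$ cannot exist unless $N$ is at least $E_{i-2}$. Concretely, a short Tietze reduction yields, via the standard correspondence between Tietze sequences and van Kampen diagrams, a bounded-area certificate that $t$ (hence $w_i$) is trivial in the group before the final relator is invoked; but proving $w_i = 1$ there amounts to filling $w_i$ in the Baumslag-Gersten group, which by Gersten's theorem requires area at least $E_n$ for a height roughly $i$.

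The hard part will be making the reduction from ``number of Tietze moves'' to ``area of a van Kampen diagram in Baumslag--Gersten'' rigorous despite the fact that the target group is trivial, so that covering-space and asphericity arguments are unavailable. This is exactly the obstacle the introduction flags: once the group collapses one loses the unique-filling property at the $2$-chain level. My approach to circumventing this is the quantitative $N$-bounded equivalence relation: rather than reasoning about equality in the trivial group, I would track which words can be proved equal by diagrams of area at most $N$, and show that the small-cancellation structure of $\mu_i$ (with the long variable relator $a_i$ chosen to satisfy an appropriate metric small-cancellation condition relative to the Baumslag--Gersten pieces) forces any diagram of subthreshold area to avoid using the relator $a_i$ in an essential way. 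Then all subthreshold-area deductions take place effectively inside the Baumslag--Gersten group, where Gersten's lower bound applies and yields the claimed $E_{i-2}$ barrier, which in turn lower-bounds the Tietze distance to the empty presentation.
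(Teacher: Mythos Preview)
Your proposal has the right overall architecture---establish an area lower bound for $x$ in $\mu_i$ via quantitative small cancellation, then convert it to a lower bound on the number of Tietze moves---but the conversion step contains a concrete error and misses the paper's actual mechanism. You assert that each elementary Tietze transformation changes the total relator length by a bounded amount, so that after $L$ moves the total length is $O(L+2^i)$. This is false: $Op_4$ replaces $a_i$ by $a_ia_j$, so total length can roughly double at each step, and after $L$ moves one only controls length by something like $2^L$ times the initial length. Your subsequent appeal to a ``standard correspondence between Tietze sequences and van Kampen diagrams'' is not a usable black box either; no such correspondence hands you a diagram for a \emph{specific} word such as $x$ out of a Tietze trivialization.

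The paper's reduction from the area bound $Area_{\mu_i}(x)>E_{i-1}$ to the Tietze bound is much simpler and is what you should do instead: track the quantity $Area_{\mu_i^{(k)}}(x)$ along the Tietze sequence $\mu_i=\mu_i^{(0)},\mu_i^{(1)},\ldots$. It starts above $E_{i-1}$ and must equal $1$ at the step just before $x$ is removed by $Op_5^{-1}$. The operations $Op_1^{\pm1},Op_2,Op_3,Op_5^{\pm1}$ do not change $Area(x)$, and a single $Op_4$ can decrease it by at most a factor of~$2$ (a diagram with $K$ cells over the new presentation yields one with at most $2K$ cells over the old, by splitting each $a_ia_j$-cell into an $a_i$-cell and an $a_j$-cell). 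Hence at least $\log_2 E_{i-1}=E_{i-2}$ applications of $Op_4$ are required. On the small-cancellation side your sketch is in the right spirit, but two points deserve care: the naive choice $a_i=tw_i^{-1}$ does not visibly satisfy any $C'(\lambda,N)$ condition---the paper inserts the markers $x^3,x^5,x^7$ into three commutators precisely to control pieces---and $a_n$ itself is not cyclically $N$-reduced for the relevant $N$, so one must pass to an intermediate word $u_{n,1}$ that is, and separately bound the cost of that conversion.
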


First, we give an outline of the proof. Notice that $\mu_i$ without the last relator $a_i$ (denoted $\mu_0 = (\{x,y,t\},\{x^{y}x^{-2}, x^ty^{-1} \})$) presents the Baumslag-Gersten group $G = <x,y,t | y^{-1}xy=x^2, y = t^{-1}xt>$ (the base group for this HNN extension is called Baumslag-Solitar). It is known that the Dehn function for $G$ is $E_{log_{2}(n)}$ (see \cite{Plat}, \cite{Gers2}). In particular they produce words $w_n$ representing the identity element of length less than $16 \cdot 2^n$ but of area greater than $E_n$. If we add the relation $t = w_n$ to $G$, then the group collapses to the trivial group. Actually $a_n$ will be slightly more compicated than just $t^{-1}w_n$, we will define it precicely later, but in any case it will kill $t$ by equating $t$ to a word of large area. Then we will prove the following theorem:

\begin{thm} \label{second}
The presentations of the trivial group $\mu_i = (\{x,y,t\},\{x^{y}x^{-2}, x^ty^{-1}, a_i \})$ for $i \in \{5,6,7,... \}$, where $l(a_i) < 100 \cdot 2^{i}$, have the property that $Area_{\mu_i}(x)>E_{i-1}$.
\end{thm}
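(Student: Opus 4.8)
The plan is to bound from below the area of an arbitrary reduced van Kampen diagram $D$ over $\mu_i$ whose boundary is labeled by $x$, and to do so by extracting from $D$ a filling of the large-area Gersten word $w_i$ over the Baumslag--Gersten presentation $\mu_0$. The cells of $D$ come in two types: \emph{$\mu_0$-cells}, labeled by the two HNN-relators $x^{y}x^{-2}$ and $x^{t}y^{-1}$, and \emph{$a_i$-cells}, labeled by $a_i^{\pm 1}$. Since $x$ is non-trivial in $G$, the diagram $D$ must contain at least one $a_i$-cell; the entire difficulty is to show that the presence of the relator $a_i$ cannot be exploited to certify $x=1$ without implicitly filling $w_i$, and hence paying its enormous area.

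First I would fix the precise form of $a_i$ so that, read cyclically, it equates $t$ with $w_i$ while satisfying a small cancellation condition relative to the HNN structure of $G$: any common subword of two $a_i$-cells, or of an $a_i$-cell with itself, that can be matched through a thin band of $\mu_0$-cells must be short. This is the analogue, in the setting of the Sacerdote--Schupp small cancellation theory for HNN-extensions, of the classical $C'(\lambda)$ hypothesis, and it is what prevents the $a_i$-cells from cancelling against one another. The key technical move is then to replace equality in $G$ by a \emph{quantitative} equivalence: I would call two words $N$-equivalent when their equality is witnessed by a diagram over $\mu_0$ of area at most $N$, and I would run Olshanskii's contiguity-subdiagram analysis with contiguity measured by this relation rather than by equality in the (trivial) group. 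A contiguity subdiagram between two $a_i$-cells then carries an honest $\mu_0$-subdiagram establishing the $N$-equivalence of matched boundary arcs, so that area cannot be hidden.

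Propagating the $t$- and $x$-bands emanating from the $a_i$-cells and using the small cancellation condition, I would show that a diagram for $x=1$ of small area forces the $a_i$-cells to be arranged along such bands in a way that assembles, inside $D$, a van Kampen diagram over $\mu_0$ with boundary $w_i$ (or a large cyclic piece of it). Combining this with the Gersten--Platonov lower bound $Area_{\mu_0}(w_i)>E_i$ then yields $Area_{\mu_i}(x)>E_{i-1}$, the single-step drop in the height of the tower absorbing the bounded losses incurred in the extraction.

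The main obstacle is precisely this extraction step. Because the group presented by $\mu_i$ is trivial, the asphericity and covering-space arguments that Gersten used to lower-bound the Dehn function of $G$ are unavailable, and one must rule out \emph{combinatorially} every low-area certificate of $x=1$. Making the contiguity theory sensitive to area rather than to mere equality in the trivial group, and checking that the small cancellation condition is strong enough to survive this quantitative reformulation so that no configuration of $a_i$-cells can short-circuit the filling of $w_i$, is where the real work lies.
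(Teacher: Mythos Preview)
Your high-level framework matches the paper's: one must develop an \emph{effective} small cancellation theory over the HNN structure of $G$, in which equality of words is replaced by the quantitative relation ``equal via a $\mu_0$-diagram with at most $N$ $t$-cells,'' and then run a contiguity/dual-graph analysis. But two key steps in your plan diverge from what actually makes the argument go through.

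First, the endgame you propose---extracting from $D$ a $\mu_0$-filling of the Gersten word $w_i$ and then invoking $Area_{\mu_0}(w_i)>E_i$---is not what the paper does and is not what the small cancellation machinery delivers. The paper's small cancellation theorem (its Theorem~\ref{small}) produces a \emph{direct} contradiction: one forms a dual graph whose vertices are the $a$-cells and whose edges are maximal ``$t$-cables'' of consecutive $t$-bands; the $C'(\tfrac16,N)$ condition forces every vertex to have degree $\ge 6$, the $N$-reduced hypothesis forces every face to have $\ge 3$ sides, and Euler's formula gives $1\le V-E/3\le 0$. No filling of $w_i$ is ever assembled, and the Gersten--Platonov bound is not invoked in the proof. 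Small cancellation arguments yield Greendlinger-type or Euler-type contradictions, not an embedded subdiagram with prescribed boundary; your ``assembles a van Kampen diagram over $\mu_0$ with boundary $w_i$'' step is where the plan is underspecified and, as stated, does not follow from the contiguity analysis.

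Second, you assume one can choose $a_i$ (of length $<100\cdot 2^i$) that already satisfies the effective small cancellation condition. It does not. The short relator $a_i=t^{-1}u_i$ is \emph{not} cyclically $N$-reduced for $N$ anywhere near $E_i$: its innermost pinches $t^{-1}x^{\pm1}t$ are witnessed by $t$-bands of length~$1$. The paper's crucial extra move is an \emph{unfolding}: before applying small cancellation, each $a_i$-cell in the putative small diagram is replaced by a $t^{-1}u_{i,1}$-cell at a cost of at most $96\,E_{i-1}$ $\mu_0$-relations per cell; with at most $E_{i-1}$ such cells the total cost is $<E_i$. The partially unfolded word $t^{-1}u_{i,1}$ \emph{is} cyclically $E_i$-reduced (by the elementary Lemma~\ref{baum} on $t$-band lengths) and does satisfy $C'(\tfrac16,E_i)$, so only then can the small cancellation theorem be applied. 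Without this unfolding step the hypothesis of your contiguity analysis fails at the outset.
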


Theorem \ref{main} follows easily from Theorem \ref{second}, we will supply all the details in the end of the paper. Now, we give a brief outline of the proof of Theorem \ref{second}.

What makes the theorem difficult is that though $Area_{\mu_0}(w_n)$ is large, this fact a priori doesn't give us any bounds on $Area_{\mu_n}(w_n)$. Furthermore, standard techniques for proving lower bounds for the area do not work for presentations of the trivial group. Unlike an HNN-extension $G$ the trivial group has no structure, no normal form theorem, to use to prove that different $2$-cells in a van Kampen diagram cannot cancel each other to form a smaller van Kampen diagram. Therefore we will proceed as follows.

The authors of \cite{SacerSch} have developed the small cancellation theory over HNN extensions: given a group $H$, an HNN extension, and a new group formed from $H$ by adding new relations, one can check if the new relations satisfy small cancellation condition (over $H$) (see also the exposition of this theory in \cite{LS}). The length of the pieces is defined as the number of occurences of the stable letter of a normal form of the relator and its cyclic permutations. The relators $a_i$ will not be in a normal form (i.e. $t$-reduced), as the normal form of the identity element has length zero (no $t$ letters), but it will be reduced in a weaker sense. If we only allow a limited number of applications of the relation $x^ty^{-1}$, $a_i$ will be reduced and will satisfy the small cancellation conditions. Then by the small cancellation theory $x$ is not trivial in $\mu_n$ (given the limited number of applications of $x^ty^{-1}$). To develop this "effective" small cancellation theory one can use the same language of van Kampen diagrams over an HNN extension as in \cite{LS}, but that results in complicated tallying of the number of applications of $x^ty^{-1}$ in this or that $t$-reduction. Therefore, we decided to use the techniques of contiguity subdiagrams used for stratified small cancellation theory \cite{Olsh1}, small cancellation theory over hyperbolic groups \cite{Olsh2}, relatively hyperbolic groups \cite{Osin}. We will first define $a_i$ so that the reader has a motivating example, then develop small cancellation theory over HNN extensions with a limited number of applications of relations, and then prove Theorem \ref{second}.\\

Let $a_n=t^{-1} u_n$, where $u_n$ is defined inductively as follows. Let 

\begin{displaymath}
u_{n,0} = [y^{-E_n}xy^{E_n},x^{3}]  [y^{-E_n}xy^{E_n},x^{5}]  [y^{-E_n}xy^{E_n},x^{7}] .
\end{displaymath}
Suppose $u_{n,m}$ is defined, then let $u_{n,m+1}$ be the word obtained from $u_{n,m}$ by replacing subwords $y^{\pm E_{n-m}}$ with $t^{-1} y^{-E_{n-m-1}}x^{\pm 1}y^{E_{n-m-1}} t$. Finally, let $u_n = u_{n,n}$. 

\begin{rem} \label{relator}
Since in the inductive step we replace subwords by equivalent (in $G$) subwords, $u_n =_G u_{n,n} =_G u_{n,n-1} =_G ... =_G u_{n,0}$. Each commutator in $u_{n,0}$ is the identity element in $G$, which makes $u_n =_G 1 $. Note that if we take $w_{n,0} = [y^{-E_n}xy^{E_n},x^1]$ and apply the same inductive procedure we will get $w_n$, the word from \cite{Plat} of area at least $E_n$ (see Figure \ref{fig:wn}). The reason for us to make $u_n$ slightly different is to satisfy a small canellation condition. $x$ to the powers $\pm 3, \pm 5, \pm 7$ act as a unique signature among cyclic permutations of $u_n$. It will be clear later why we avoid powers of $2$. We can make an estimate $l(a_n) \le 100 \cdot 2^n$.
\end{rem}

\begin{figure}[h]
\centering
\includegraphics[scale=0.4]{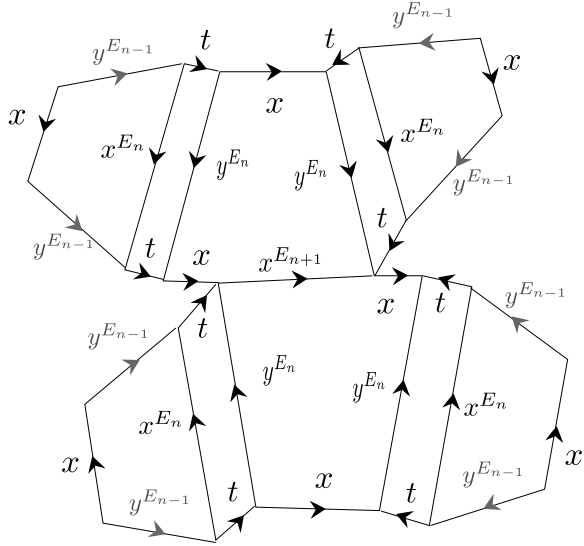}
\caption{Van Kampen diagram for $w_{n,1}$. Parts of the boundary to be replaced by shorter paths to get $w_{n,2}$ are marked by grey.}
\label{fig:wn}
\end{figure}

\section{Proofs} \label{proofs}

Now we develop small cancellation theory over HNN extensions with the limited number of operations with the stable letter. We will be using diagram approach to HNN extensions first used in \cite {MillerSchupp}, see also an exposition in \cite{Sapir}, and in \cite{Short} for dual diagrams. The main instrument of this approach is the $t$-band. Given a presentation of an HNN extension $H$ with the stable letter $t$, a $t$-band is a collection of $t$-cells in a van Kampen diagram over this presentation such that the cells are adjacent to each other along $t$-edges. We will be using implicitly the following fact. The outer edge of a $t$-band annulus is a trivial in $H$ element and therefore trivial in the base group by Britton's lemma (cf \cite{LS}). Therefore the subdiagram bounded by the outer edge can be replaced by a diagram containing cells from the base group only. (In fact, one can see why Britton's lemma is true by replacing innermost annuli using the definition of HNN extensions: commuting with $t$ applies the isomorphism of the subgroups, see \cite{Sapir}, \cite{Short} for details.) We need the following analogue of "reduced" in HNN extensions.

\begin{defn}
Let $\mu_H$ be a presentation of an HNN extension $H$ with the stable letter $t$ and associated subgroups $A,B$ ($t^{-1}At = B$). Let $N$ be a natural number. We call a word $w=g_0 t^{\delta_1} g_1 ... g_{m-1} t^{\delta_m} g_m$ ($g^i$ are words in the letters of the base group, $\delta_i = \pm 1$) $N$-reduced if the following holds:\\

If $\delta_i = -1$ and $\delta_{i+1} = 1$ then either $g_i$ is not in $A$, or $t^{-1}g_i t = g \in B$ but any van Kampen diagaram for $t^{-1}g_i t g^{-1}$ contains more than $N-1$ $t$-cells.\\

If $\delta_i = 1$ and $\delta_{i+1} = -1$ then either $g_i$ is not in $B$, or $t g_i t^{-1} = g \in A$ but any van Kampen diagaram for $tg_i t^{-1} g^{-1}$ contains more than $N-1$ $t$-cells. 
\end{defn}

\begin {rem} \label {reduced}
I.e. pinches are allowed only if they are witnessed by long enough $t$-bands; $\infty$-reduced is the usual notion of "reduced" for HNN extensions. Note that this definition is independent of the particular words $g_i$ as long as they represent the same elements of the base group. Therefore we think of reduced words as sequences $g_0, t^{\delta_1}, g_1, ... g_{m-1}, t^{\delta_m}, g_m$, where $g_i$ are elements of the base group. However, two different reduced sequences can reprsent the same element of $H$: unlike normal forms, where coset representatives are fixed (see \cite{LS} for details), reduced forms have this ambiguity. We write $w_1 \equiv w_2$ if they are the same as sequences of letters $t^{\pm 1}$ and elements of the base group.
\end {rem}
Similarly, we introduce cyclically $N$-reduced:

\begin{defn} \label{cycl}
Let $\mu_H$ be a presentation of an HNN extension $H$. Let $N$ be a natural number. We call a word $w=g_0 t^{\delta_1} g_1 ... g_{m-1} t^{\delta_m} g_m$ ($g^i$ are in the base group, $\delta_i = \pm 1$) cyclically $N$-reduced if all cyclic permutations of $g_0, t^{\delta_1}, g_1, ..., g_{m-1}, t^{\delta_m}, g_m$ are $N$-reduced.
\end{defn}

One can now prove the $N$-reduced versions of Britton's Lemma and Collins' Lemma \cite{LS}, but since we have chosen not to consider van Kampen diagrams over $H$ we don't need them: their meaning will be incorporated in the proof of our main small cancellation result (Theorem \ref{small}). We pove a lemma about $G$:
\begin{lem} \label{baum}
Let $v_1 = t^{-1} g_1 t$, $v_2 = t g_2 t^{-1}$, where $g_1, g_2$ are words in the letters $x,y,x^{-1},y^{-1}$ and $g_1=x^{i}$, $g_2=y^{i}$ in the Baumslag-Solitar group. Then $v_1$, $v_2$ are $i$-reduced (relative to $\mu_0$).
\end{lem}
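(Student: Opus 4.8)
The plan is to read $\mu_0$ as a presentation of an HNN extension whose base is the Baumslag--Solitar group $BS = \langle x,y \mid y^{-1}xy = x^2\rangle$, with stable letter $t$ and associated subgroups $A = \langle x\rangle$, $B = \langle y\rangle$, the relator $x^t y^{-1} = t^{-1}xty^{-1}$ encoding $t^{-1}x^j t =_{BS'} y^j$. I would first record that both $x$ and $y$ have infinite order in $BS$ (indeed $BS \cong \mathbb{Z}[1/2]\rtimes \mathbb{Z}$, with $x$ generating the normal $\mathbb{Z}[1/2]$ and $y$ generating the acting $\mathbb{Z}$), so that $A$ and $B$ are infinite cyclic. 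I would then unwind the definition of $N$-reduced for $v_1 = t^{-1}g_1 t$: its syllable sequence is $1,\, t^{-1},\, g_1,\, t,\, 1$, whose only consecutive pair of $t$-letters is of the pinch type $\delta=-1,\ \delta'=+1$, with middle element $g_1 = x^i \in A$ and $t^{-1}g_1 t = y^i \in B$. By Remark \ref{reduced}, being $i$-reduced is then exactly the claim that every van Kampen diagram over $\mu_0$ with boundary label $t^{-1}x^i t\, y^{-i}$ contains more than $i-1$ (that is, at least $i$) $t$-cells. The assertion for $v_2 = t\,y^i t^{-1}$ is the mirror claim for the boundary label $t\,y^i t^{-1} x^{-i}$, using $t\,y^i t^{-1} =_{BS'} x^i \in A$.

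To prove the $t$-cell lower bound I would argue with $t$-bands. The only relator of $\mu_0$ containing $t$ is $t^{-1}xty^{-1}$, so every $t$-cell has exactly two $t$-edges, separated on one side by the single letter $x$ and on the other by $y$; hence along any $t$-band one long side reads a product of letters $x^{\pm 1}$, one per cell, and the opposite side a product of letters $y^{\pm 1}$, one per cell. I would take a diagram $D$ for $t^{-1}x^i t\, y^{-i}$ with the minimal number of $t$-cells, and, using the fact recorded at the start of this section that the subdiagram bounded by a $t$-annulus can be refilled by base-group cells, assume $D$ has no $t$-annuli. Then the two boundary $t$-edges are joined by a single $t$-band $\mathcal{B}$, with $k$ cells say, and the two complementary subdiagrams contain only cells of the base group.

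The heart of the matter is an exponent-sum count. The subdiagram lying between $\mathcal{B}$ and the boundary arc labelled $x^i$ is $t$-free, so the $x$-side of $\mathcal{B}$ is a word $W$, a product of $k$ letters $x^{\pm 1}$, with $W = x^i$ in $BS$; since powers of $x$ commute, $W = x^{s}$ in $BS$ where $s$ is the signed sum of the $k$ exponents, and because $x$ has infinite order, $x^s = x^i$ forces $s=i$. As $|s|\le k$ this yields $k \ge i$, so $D$, and hence every diagram, has at least $i$ $t$-cells. The case of $v_2$ is identical after interchanging $x$ with $y$ and $A$ with $B$, invoking the infinite order of $y$. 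I expect the main obstacle to be the band bookkeeping of the middle paragraph: one must justify that $\mathcal{B}$ is coherent, so that the same generator $x$ occupies one fixed side of the band throughout and $W$ really is a product of exactly $k$ letters $x^{\pm 1}$, and that $t$-annuli may be discarded without increasing the $t$-cell count. Both are standard features of the diagram calculus for HNN extensions, but they should be stated carefully so that the clean exponent-sum estimate applies.
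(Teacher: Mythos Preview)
Your argument is correct and follows the same route as the paper's proof: isolate the single $t$-band joining the two boundary $t$-edges, observe that one side of it is a length-$k$ word in $x^{\pm 1}$ which equals $x^i$ in the Baumslag--Solitar group (using that $BS$ embeds in $G$), and read off $k\ge i$. The paper compresses all of this into the line ``Because $G$ is an HNN extension of the Baumslag--Solitar group, the latter embeds in $G$. Therefore $i=k$''; your exponent-sum computation and your remarks on annulus removal and band coherence are exactly the bookkeeping hidden behind that sentence.
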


\begin{proof}
We prove this lemma for $v_1$, for $v_2$ it's completely analogous. Consider a van Kampen diagram for $t^{-1} g_1 t g^{-1}$, where $g = y^i$ (see Figure \ref{fig:baum}). Since there are only two letters $t$ on the boundary of the diagram, they must be connected by a $t$-band, say of length $k$. Because $G$ is an HNN extension of the Baumslag-Solitar group, the latter embeds in $G$. Therefore $i=k$.
\end{proof}

\begin{figure}[h]
\centering
\includegraphics[scale=0.4]{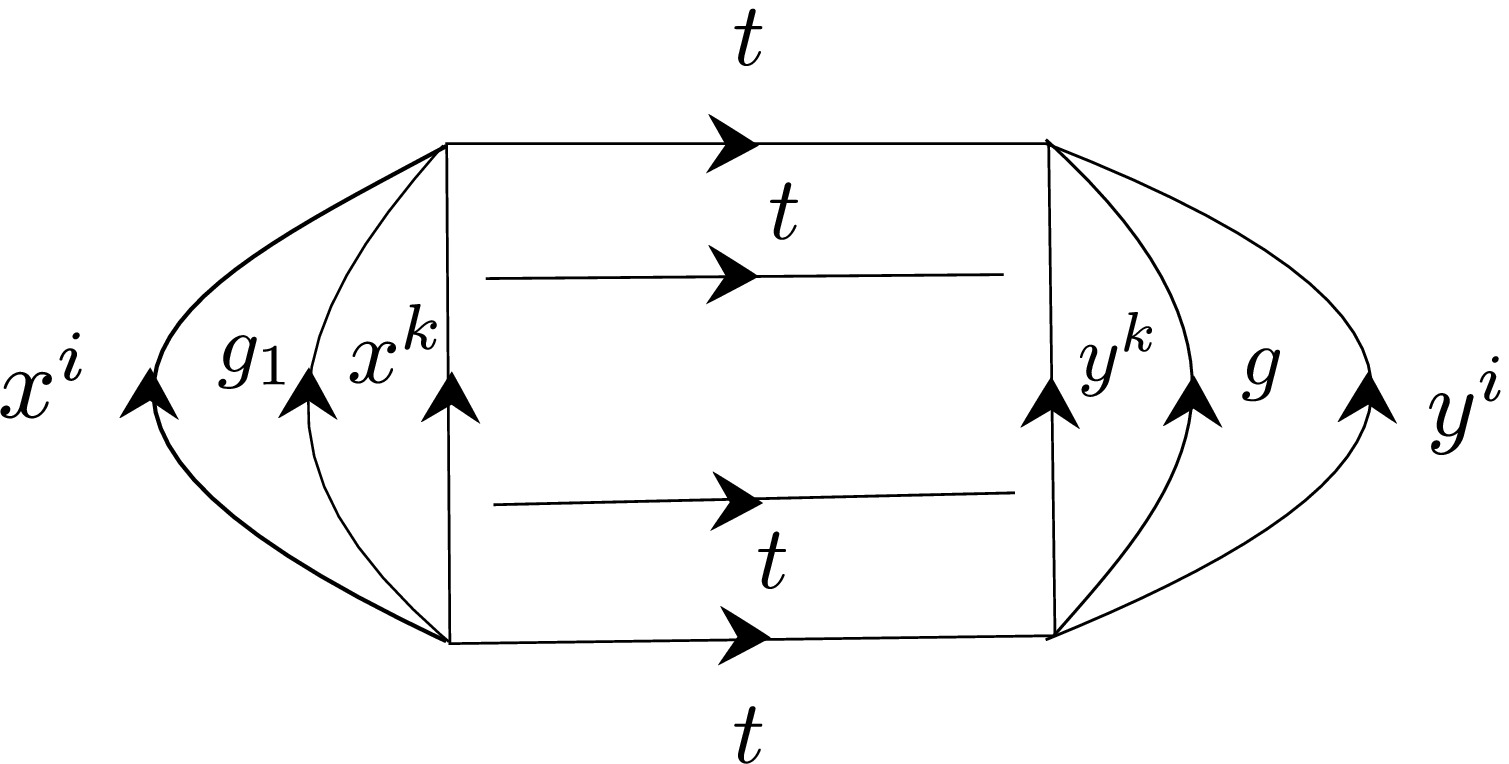}
\caption{}
\label{fig:baum}
\end{figure}

\begin{rem} \label{relator2}
Notice that 

\begin{displaymath}
u_{n,1} = [t^{-1} y^{-E_{n-1}}x^{-1}y^{E_{n-1}} t x t^{-1} y^{-E_{n-1}}x^{1}y^{E_{n-1}} t,x^{3}] ... 
\end{displaymath}
is $E_n$-reduced by the preceding lemma.
(Here we omitted the other two commutators involving $x^5$ and $x^7$ instead of $x^3$ , which look the same.)
Similarly, $u_{n,2}$ is $E_{n-1}$-reduced, $u_{n,3}$ is $E_{n-2}$-reduced, etc... Also, $u_{n,1}$ is cyclically $E_n$-reduced, and so is $t^{-1}u_{n,1}$ and $(t^{-1}u_{n,1})^{-1}$. Since our theory will only work with cyclically $N$-reduced words, it is a problem that $u_n$, $a_n$ are not cyclicaly $N$-reduced for large $N$. We will deal with this in the proof of Theorem \ref{second}, but for now we prove results about $u_{n,1}$.

\end{rem}

We will state the small cancellation condition we need. First we will define a piece.

\begin{defn} \label{piece}
Let $\mu_H$ be a presentation of an HNN extension $H$ with the stable letter $t$ and associated subgroups $A,B$ ($t^{-1}At = B$). Let $N$ be a natural number. Let $R$ be a set of cyclically $N$-reduced words in $\mu_H$, such that $R$ is closed under inversion and cyclic permutation of the reduced sequences  (see Remark \ref{reduced}). We call such a set $R$ $N$-symmetrized. Let $r \equiv pb$, $r' \equiv p'b'$ be in $R$. We call $p$ an $N$-piece if the following holds:\\
(1) both $p$ and $p'$ start and end with $t^{\pm 1}$, \\
(2) $p = v_1 p' v_2$ in $H$, \\
(3) $v_1, v_2$ are in the base group,\\
(4) there exists a van Kampen diagram for $p = v_1 p' v_2$, where all $t$-bands are of length less than $N$,\\
(5) $b \not \equiv v_1 b' v_2$.\\
See Figure \ref{fig:piece}.
\end{defn}

\begin{rem}
Like in the usual small cancellation theory condition (5) is to make sure that if $r=r'$ we do not get pieces of the length of $r$. We can get away with this because we can cancel the corresponding cells in the van Kampen diagram making it reduced (see the proof of Theorem \ref{small} for details). If we make the condition (5) even stronger:  $b \neq_H v_1 b' v_2$, and if we also forget (4) we can get the small cancellation results over HNN extensions of \cite{LS}. We require a weaker condition (5) because otherwise in the cancelling of the coresponding cells in the van Kampen diagram we might need to add $t$-cells, which we do not want. With (5) as it is we might be required to add cells from the base group of the HNN extension only. Our $N$-symmetrized set will be such that we can afford the weaker condition (5) (see Lemma \ref{c6}).
\end{rem}

Denote by $l_t(w)$ the number of occurences of the letters $t, t^{-1}$ in the word $w$. We will measure the length of pieces with $l_t$. It makes sense, because for two $N$-reduced words $l_t$ the same if the words are equal in $H$ (given our usual restriction on the length of $t$-bands). We define a metric small cancellation condition:

\begin{defn}
Let $R$ be an $N$-symmetrized set (as in the previous definition).\\
We say condition C'($\lambda$,$N$) is satisfied for $R$ if $r\in R$, $r \equiv pb$, where $b$ is an $N$-piece, implies $l_t(p) < \lambda l_t(r)$.
\end{defn}

We will only prove one small cancellation result, the only one we need:

\begin{thm} \label{small}
Let $\mu_H$ be a presentation of an HNN extension $H$ with the stable letter $t$ and associated subgroups $A,B$ ($t^{-1}At = B$). Suppose we have words $r_1,...,r_m, w$, such that $r_1,...,r_m$ generate an $N$-symmetrized set, $w \neq 0$ in $H$ and $l_t(w)$=0. Denote by $\mu$ the presentation obtained from $\mu_H$ by adding $r_1,...,r_m$ as relators. Then if $R$ satisfies C'($\frac{1}{6}$,$N$) there is no van Kampen diagram over $\mu$ with boundary cycle $w$ which contains less than $N$ $t$-cells ($t$-cells from $\mu_H$).
\end{thm}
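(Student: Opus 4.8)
The plan is to run the classical small‑cancellation argument of Greendlinger/Weinbaum in the HNN setting, replacing the usual notion of ``reduced diagram'' with the $N$‑reduced machinery developed above. Let $D$ be a van Kampen diagram over $\mu$ with boundary label $w$, and suppose for contradiction that $D$ contains fewer than $N$ $t$‑cells coming from $\mu_H$. First I would show we may assume $D$ is \emph{reduced} in a suitable sense: whenever two relator‑cells (cells labeled by some $r_j$) share a maximal common boundary arc such that cancelling them is possible using only base‑group cells, we perform that cancellation. This is exactly the situation that condition (5) in Definition \ref{piece} is engineered to permit --- the weaker form $b \not\equiv v_1 b' v_2$ guarantees that any such merging can be realized by adding cells from the base group only, not new $t$‑cells, so the total number of $t$‑cells does not increase and the induction on $\mathrm{Area}$ terminates. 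After this reduction, any two distinct relator‑cells meet along an arc that is genuinely an $N$‑piece in the sense of the definition.

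Next I would set up the curvature/counting argument on the relator‑cells. Consider the subdiagram consisting of the $\mu_H$‑relator cells ($r_j$‑cells) and contract the base‑group cells, viewing the $t$‑bands as the carriers of the interaction. Using the fact (from the $t$‑band discussion and Britton's Lemma) that every $t$‑band either runs between two boundary $t$‑edges or connects two relator cells, and that $l_t(w)=0$ means \emph{no} $t$‑edges reach the boundary, I would argue that every $t$‑band must have both endpoints on relator cells. This confines all the combinatorial interaction to arcs between relator cells, each of which is an $N$‑piece with $l_t$ measuring its length. The hypothesis that all $t$‑bands in $D$ have length less than $N$ (forced by the assumption of fewer than $N$ $t$‑cells) is what legitimizes clause (4) of the piece definition and lets us invoke $C'(1/6,N)$: along each relator cell $r$, the boundary is partitioned into maximal pieces, and $C'(1/6,N)$ says each such piece contributes less than $\tfrac16 l_t(r)$ to the $l_t$‑length.

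Now I would derive the contradiction via a combinatorial Gauss--Bonnet / Euler characteristic count, weighting edges by $l_t$ rather than ordinary length. With the small cancellation constant $1/6$, each interior relator cell has at least $7$ neighbouring pieces, and the standard small‑cancellation inequality shows that the total ``positive curvature'' cannot be accommodated in a planar diagram unless some $t$‑edge reaches the boundary. Because $l_t(w)=0$, no $t$‑length is available on the boundary to absorb this curvature, so the only possibility is $D$ having no relator cells at all; but then $D$ is a diagram over $\mu_H$ with boundary $w$, forcing $w=_H 1$, contradicting $w\neq_H 1$. I expect the main obstacle to be precisely the bookkeeping that makes the reduction step of the first paragraph compatible with the $t$‑band length bound: one must verify that cancelling a pair of relator cells never secretly lengthens some other $t$‑band past $N$, and that the measurement $l_t$ is genuinely well‑defined on pieces (independent of the choice of base‑group words $g_i$), which is where Remark \ref{reduced} and the $i$‑reducedness of Lemma \ref{baum} do the real work. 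Keeping the ``budget'' of fewer than $N$ $t$‑cells intact throughout all reductions is the delicate quantitative heart of the argument, distinguishing it from the purely qualitative classical theory.
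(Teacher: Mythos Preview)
Your proposal is essentially correct and follows the same architecture as the paper's proof: assume a diagram with fewer than $N$ $t$-cells, reduce it, observe that $l_t(w)=0$ forces every $t$-band to run between $r$-cells, identify the resulting interfaces as $N$-pieces, and finish with an Euler-characteristic count using $C'(\tfrac16,N)$.

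Two points of comparison are worth noting. First, the paper packages the combinatorics slightly differently: rather than a Greendlinger/curvature count on the $r$-cells themselves, it groups maximal runs of consecutive $t$-bands between the same pair of $r$-cells into single objects called \emph{$t$-cables}, then builds a dual planar graph $D^*$ whose vertices are $r$-cells and whose edges are $t$-cables. One then checks that $D^*$ has no $1$- or $2$-gon faces (this uses cyclic $N$-reducedness and maximality of cables), so $F\le\tfrac{2E}{3}$, while $C'(\tfrac16,N)$ gives every vertex degree $\ge 6$, so $E\ge 3V$; plugging into $V-E+F=1$ yields $1\le 0$. This is equivalent in spirit to your Gauss--Bonnet outline but avoids weighting edges by $l_t$ and makes the ``at least three pieces per face / at least six cables per cell'' structure completely explicit.

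Second, your concern about the reduction step secretly lengthening $t$-bands is handled in the paper not by tracking local modifications but by a global minimality choice: among all diagrams for $w$ with fewer than $N$ $t$-cells, take one with the fewest $r$-cells. Then if condition~(5) of Definition~\ref{piece} fails along some cable, the two $r$-cells can be replaced by a subdiagram using only base-group cells, contradicting minimality; no bookkeeping of $t$-band lengths under iterated surgery is needed. This sidesteps exactly the ``delicate quantitative heart'' you anticipated.
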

\begin{proof}
By $r$-cells we will call cells corresponding to the relators $r_1,...,r_m$. \\

Suppose for the sake of contradiction that we have such a diagram $D$ which has less than $N$ $t$-cells. We can assume this diagram is reduced. We also want it to be $r$-reduced it in the following sense. We choose $D$ to have minimal number of $r$-cells granted we do not increase the number of $t$-cells.\\

Now consider $t$-bands in the diagram $D$. Since the boundary does not have any letters $t$, $t$-bands have to either be rings or orginate and end at the $r$-cells. We call two $t$-bands consecutive if they begin and end on consecutive letters $t^{\pm 1}$ of the same $r$-relator (by this we mean they can be separated by letters other than $t^{\pm 1}$) and the subdiagram bounded by this two $t$-bands on the sides and two $r$-cells at the ends contains only $\mu_H$ cells.\\

A maximal sequence of $t$-bands, including the cells in between, such that neigbouring bands are consecutive, will be called a {\it $t$-cable} (see Figure \ref{fig:cable}). We claim that the ends of a $t$-cable are $N$-pieces. All requirements of Definition \ref{piece} are straightforward to check, we only note that requirement (5) follows from the fact that $D$ is $r$-reduced. If (5) is not satisfied we can replace the subdiagram bounded by $b^{-1} v_1 b' v_2$ (in the notation of the definition \ref{piece}) by a diagram containing only cells from the base of the HNN extension. \\

\begin{figure}[h]
\centering
\includegraphics[scale=0.5]{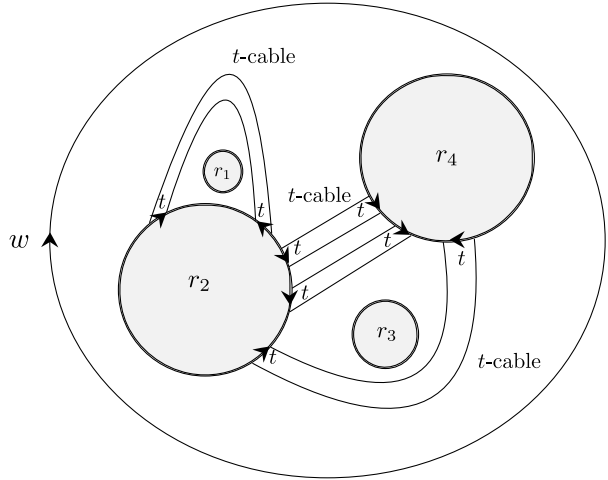}
\caption{An example of $D$. $r$-cells are marked with grey. There are four $t$-bands and three $t$-cables in this example. Note that this example is not realistic, because $r_1$, $r_3$ have no letters $t$.}
\label{fig:cable}
\end{figure}

We define a dual diagram to $D$, call it $D^*$. Associate to each $r$-cell a point. Connect them by edges corresponding to $t$-cables. Take an innermost connceted component of this graph and call it $D^{*}$. We claim that $D^{*}$ does not have any faces with less than $3$ edges. The fact that $D^*$ is an innermost component implies that the faces of $D^*$ are faces of the whole dual to $D$.\\

A face with $1$ edge ($L_1$ on Figure \ref{fig:dual}) would mean that there is a $t$-cables with both ends on the same cell, such that there are no other $t$-cables in between that $t$-cables and the relator (call that subdiagram $D'$). That implies that there are no letters $t^{\pm 1}$ on the boundary of that relator between the pieces (marked as $g_1$ on the figure) and there are no other $r$-cells in $D'$, but that contradicts that all cyclic permutations of $r_1,...,r_m$ are $N$-reduced.\\

A face with $2$ edges (marked $L_2$, $L_3$ on Figure \ref{fig:dual}) would arise from two $t$-cables between two (possibly one) cells such that there are no other $r$-cells in between. But that would contradict that $t$-cables are maximal sequences of consequitive $t$-bands.\\

\begin{figure}[h]
\centering
\includegraphics[scale=0.5]{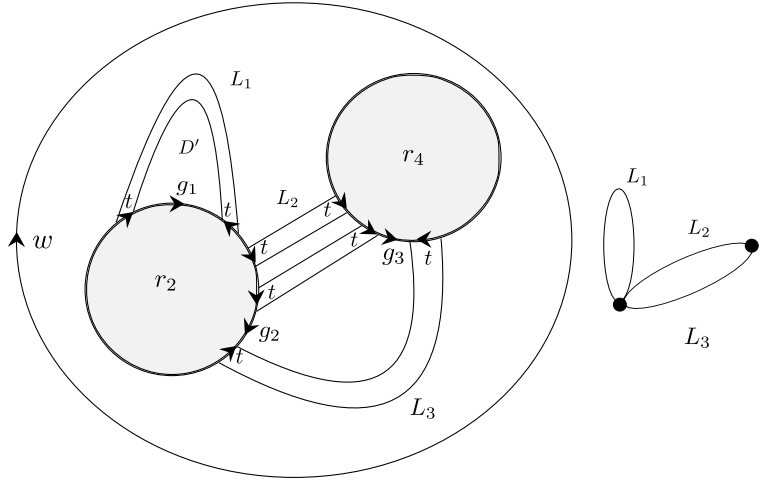}
\caption{An (impossible) example of $D$ and $D^*$ next to it. $L_1$ can not be a $t$-cable and $L_2$, $L_3$ should be one $t$-cable.}
\label{fig:dual}
\end{figure}

$D^*$ is a planar connected graph, therefore we have the Euler's Formula

\begin{displaymath}
V-E+F=1.
\end{displaymath}
From the fact that each face of $D^*$ has at least $3$ edges we have $F \leq \frac{2E}{3}$. Euler's Formula then becomes

\begin{displaymath}
1=V-E+F \leq V - \frac{E}{3}.
\end{displaymath}

Since the ends of a $t$-cable correspond to $N$-pieces, from C'($\frac{1}{6}$,$N$) we know that each vertex has degree at least $6$ (if we count a looping edge twice). Therefore we have $E \geq \frac{6V}{2} = 3V$, and the inequality becomes

\begin{displaymath}
1 \leq V - \frac{E}{3} \leq V - V = 0.
\end{displaymath}

We have reached a contradiction.

\end{proof}

To apply the preceding theorem we need a C'($\lambda$,$N$) $N$-symmetrized set. We will first need the following lemma.

\begin{lem} \label{solitar}
In the Baumslag-Solitar group (the base group of $G$) if $y^i x^m y^j = x^k$ then $i=-j$ and $\frac{m}{k} = 2^{j}$.
\end {lem}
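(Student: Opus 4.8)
The base group is the Baumslag--Solitar group $\langle x,y\mid y^{-1}xy=x^2\rangle$, and the cleanest route is through its standard faithful representation. The plan is to realize it inside the affine (equivalently, upper-triangular matrix) group by letting $x$ act as a unit translation and $y$ as a dilation compatible with the relation $y^{-1}xy=x^2$; concretely one may take $x=\bigl(\begin{smallmatrix}1&1\\0&1\end{smallmatrix}\bigr)$ and $y=\bigl(\begin{smallmatrix}1&0\\0&2\end{smallmatrix}\bigr)$, which satisfy the defining relation, and this assignment extends to a faithful representation of the whole group (the familiar solvable model $\mathbb Z[1/2]\rtimes\mathbb Z$ of $BS(1,2)$). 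Faithfulness converts the word problem into linear algebra: the hypothesis $y^i x^m y^j = x^k$ becomes an identity of $2\times 2$ matrices.

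First I would compute the product $y^i x^m y^j$ as a single matrix. Its diagonal records the total dilation, and comparing it with the trivial diagonal of $x^k$ forces $2^{\,i+j}=1$, hence $i=-j$. This is the heart of the matter: it is precisely the assertion that no nonzero power of the dilation $y$ can lie in the translation subgroup $\langle x\rangle$, so the two blocks of $y$'s must cancel completely. Substituting $i=-j$, comparison of the off-diagonal (translation) entries then relates $m$ and $k$ by a single power of $2$, yielding the stated identity $m/k = 2^{\,j}$.

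As an alternative that stays inside the group and matches the diagrammatic language used elsewhere in the paper, I would read $\langle x,y\mid y^{-1}xy=x^2\rangle$ as the ascending HNN extension of $\langle x\rangle\cong\mathbb Z$ with stable letter $y$ and reduce $y^i x^m y^j$ using Britton's lemma (cf.\ \cite{LS}). Here one repeatedly absorbs the central pinch $y^{-1}x^{\ell}y$ (doubling the exponent) or $y x^{\ell}y^{-1}$ (halving it), peeling one $y$ off each side at every step. Since the target $x^k$ lies in the base group, no $y^{\pm1}$ may survive, which forces the two $y$-blocks to have equal length and opposite sign, i.e.\ $i=-j$; tallying the $|j|$ doublings incurred during the reduction then produces the factor $2^{\,j}$ relating $m$ and $k$.

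The step I expect to require the most care is the sign/parity bookkeeping in this reduction: when $i>0>j$ the relevant pinches are of halving type and only go through while the intermediate exponents remain even, so one must track divisibility alongside the count of $y$'s, and one must separately dispose of the degenerate cases $i=0$ or $j=0$. None of this is deep --- the faithful representation makes the statement essentially immediate --- but it is exactly the place where a careless argument could drop a factor of $2$ or the correct sign of the exponent, so I would keep the matrix computation as the primary, bookkeeping-free justification and present the HNN reduction only as corroboration.
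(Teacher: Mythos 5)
Your proposal is correct, and its primary route is genuinely different from the paper's. The paper argues entirely inside the group, in two lines: $i=-j$ follows because the right-hand side contains no $y^{\pm 1}$ (i.e., the $y$-exponent-sum homomorphism onto $\mathbb{Z}$), and then for $i<0$ it pushes the $y$'s through $x^m$ via $y^{-1}x^m = x^{2m}y^{-1}$, while for $i>0$ it conjugates the equation into $x^m = y^{-i}x^k y^{-j}$ and reuses the $i<0$ case. That conjugation trick is exactly what disposes of the halving/parity bookkeeping you flag as the delicate point of your Britton-style second route: the paper only ever doubles, never halves, so no divisibility tracking is needed. Your matrix route avoids the issue wholesale and is arguably cleaner; note that your diagonal comparison $2^{i+j}=1$ is the same $y$-exponent-sum homomorphism in disguise. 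Two corrections, though. First, faithfulness is not needed and should not be invoked: the hypothesis is an equality in the group and the conclusion concerns the integers $i,j,m,k$, so it suffices that the assignment $x\mapsto\bigl(\begin{smallmatrix}1&1\\0&1\end{smallmatrix}\bigr)$, $y\mapsto\bigl(\begin{smallmatrix}1&0\\0&2\end{smallmatrix}\bigr)$ satisfies $y^{-1}xy=x^2$ and hence defines a homomorphism; equality in the group then implies equality of matrices, which is the only direction you use, so the appeal to the faithful $\mathbb{Z}[1/2]\rtimes\mathbb{Z}$ model is an unnecessary dependency. Second, carrying out the multiplication gives $y^i x^m y^j\mapsto\bigl(\begin{smallmatrix}1&m2^j\\0&2^{i+j}\end{smallmatrix}\bigr)$, hence $k=2^j m$ and $m/k=2^{-j}=2^{i}$, the \emph{reciprocal} of the stated $2^j$; so your computation does not literally ``yield the stated identity.'' The same sign slip occurs in the paper itself (its formula $y^i x^m y^j=x^{2^i m}$ for $i<0$ should read $x^{2^{-i}m}$), and it is harmless downstream: the application in Lemma \ref{c6} uses only that $m/k$ is a power of $2$ with exponent $\pm j$, which for odd $m,k\in\{\pm 1,\pm 3,\pm 5,\pm 7\}$ forces $j=0$ and $m=k$.
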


\begin {proof}

Since the right part of the given equation does not contain any letter $y^{\pm 1}$, $i=-j$. If $i<0$,  $y^i x^m y^j = x^{2^i m} y^i y^j = x^{2^i m}$, and the result follows. If $i>0$, then by conjugating both sides of the equation we have  $x^m = y^{-i} x^k y^{-j} = x^{2^{-i} k}$ as in the previous case.

\end {proof}

Now we can prove

\begin{lem} \label{c6}

The set of all cyclic permutations of $t^{-1}u_{n,1}$ and $(t^{-1}u_{n,1})^{-1}$ is an $E_n$-symmetrized set satisfying condition C'($\frac{1}{6}$,$E_n$) over $G$.

\end {lem}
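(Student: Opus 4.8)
The statement splits into two claims: that $R$ is $E_n$-symmetrized, and that it satisfies C'($\frac16$,$E_n$). The first is immediate — $R$ is closed under inversion and cyclic permutation by construction, and Remark \ref{relator2} already records that $t^{-1}u_{n,1}$ and its inverse are cyclically $E_n$-reduced — so all the content is in the metric condition. Since $l_t(t^{-1}u_{n,1})=25$ (the leading $t^{-1}$ together with $24$ letters $t^{\pm1}$, eight from each of the three commutators), C'($\frac16$,$E_n$) amounts to showing that every $E_n$-piece $p$ satisfies $l_t(p)\le 4<\tfrac{25}{6}$. I would stress at the outset that the lone leading $t^{-1}$ is exactly what makes $\tfrac{25}{6}>4$ instead of $\tfrac{24}{6}=4$, so it is doing real work and cannot be dropped.

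The plan is first to read off the coarse structure of $r=t^{-1}u_{n,1}$ as a cyclic word: a single \emph{lone} $t^{-1}$, followed by twelve \emph{units} $U_{\pm}=t^{-1}x^{\pm E_n}t$, with consecutive units separated by one short base element. Writing $z_1=U_-xU_+$ one has $c_k=z_1^{-1}x^{-k}z_1x^k=U_-x^{-1}U_+x^{-k}U_-xU_+x^k$, so the cyclic sequence of between-unit base elements is $x^{-1},x^{-3},x,x^3,x^{-1},x^{-5},x,x^5,x^{-1},x^{-7},x,x^7$, the \emph{signatures} being the entries $x^{\pm3},x^{\pm5},x^{\pm7}$. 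The decisive structural fact is that the only pinches occurring in $r$ are the $t^{-1}x^{\pm E_n}t$ inside the units, and these are \emph{long}: by Lemma \ref{baum} any diagram witnessing such a pinch needs a $t$-band of length $E_n$.

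The heart of the proof is a rigidity step. Let $p=v_1p'v_2$ be an $E_n$-piece, realized by a diagram all of whose $t$-bands have length $<E_n$ (Definition \ref{piece}(4)); planarity forces the bands to pair the $t$-letters of $p$ and $p'$ in order and with equal signs. I would show that a unit $U_{\pm}$ inside $p$ can be matched only to a unit of the \emph{same} sign, and with both flanking bands of length $0$. Indeed, a length-$\ell$ band has one side labeled $x^{\ell}$ and the other $y^{\ell}$; the region between the two bands bounding the inner $x^{\pm E_n}$ of a unit has horizontal sides $x^{\pm E_n}$ (from $p$ and from $p'$) and vertical sides read off the bands, giving a relation of the form $y^{i}x^{\pm E_n}y^{j}=x^{\pm E_n}$ in the base group. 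Lemma \ref{solitar} then demands $E_n=2^{j}E_n$, i.e. $j=0$ (and $i=-j=0$): a short band simply cannot absorb the $E_n$ discrepancy. Hence the units match with trivial flanking bands, and so $p$ is matched to $p'$ \emph{letter by letter}, with corresponding between-unit base elements literally equal.

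With rigidity in place the estimate is combinatorial. A scan of the listed sequence shows that any subword of $r^{\pm1}$ with $l_t\ge5$ contains, strictly between two of its $t$-letters, either a signature $x^{\pm k}$ or the pair of consecutive $t^{-1}$'s created by the lone $t^{-1}$ (the unique signature-free consecutive pair is the one adjacent to that lone $t^{-1}$, but it carries the sign pattern $(-,-)$). Each such marker is essentially unique: the signatures $3,5,7$ are distinct and pairwise not related by a power of $2$, so by Lemma \ref{solitar} no conjugation can carry one signature onto another or onto $x^{\pm1}$; and $(-,-)$ occurs only at the lone $t^{-1}$ of $r$ and not at all in $r^{-1}$. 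A match of such a long subword therefore pins $p$ and $p'$ to the same occurrence in the same relator, so $b\equiv v_1b'v_2$, contradicting Definition \ref{piece}(5). Thus no $E_n$-piece has $l_t\ge5$, while $U_-x^{-1}U_+$ (recurring before each of $x^{-3},x^{-5},x^{-7}$) shows $4$ is attained, giving $l_t(p)\le4<\tfrac16 l_t(r)$. I expect the rigidity step to be the main obstacle — the careful $x$-side/$y$-side bookkeeping of $t$-bands needed to force the flanking bands to be trivial — together with checking uniformly that no signature of $r$ is carried onto a signature of any cyclic permutation of $r^{-1}$; the marker count and the role of avoiding powers of $2$ are then routine.
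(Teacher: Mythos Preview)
Your overall strategy---rigidity via Lemma~\ref{solitar}, then signature uniqueness to force $l_t(p)\le 4<\tfrac{25}{6}$---is the paper's strategy, and your structural description of $r=t^{-1}u_{n,1}$ is accurate. But the rigidity step, as you have written it, does not work: you claim the region inside a unit $U_\pm=t^{-1}x^{\pm E_n}t$ is flanked by $y$-sides of the two $t$-bands, yielding $y^i x^{\pm E_n} y^j = x^{\pm E_n}$. The sides are in fact $x$-sides. Since $t^{-1}xt=y$, the $A$-side (the $x$-side) of a $t$-band faces the region enclosed by $t^{-1}\cdots t$; so the interior of a unit sees $x^{\ell}$ from each band, and the relation there is $x^i x^{\pm E_n} x^j = x^{\pm E_n'}$, which only gives $i+j=0$ when the signs agree, not $i=j=0$. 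Concretely, $p=U_+$ matches $p'=U_+$ with $v_1=y^{a}$, $v_2=y^{-a}$ and both bands of length $|a|$ for any $|a|<E_n$, so the step ``hence the flanking bands are trivial'' fails on the inner region.

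The fix---and this is what the paper actually does---is to run the argument on the \emph{between-unit} regions. These sit between a $t$ and a $t^{-1}$, so both flanking bands show their $B$-side, giving $y^{i}x^{k}y^{j}=x^{k'}$ with $k,k'\in\{\pm1,\pm3,\pm5,\pm7\}$. Now Lemma~\ref{solitar} applies and forces $j=0$ (the ratio $k/k'$ is odd, so $2^{j}=1$) and $k=k'$, hence $i=0$. Every $t$-band borders some between-unit region, so all bands have length $0$ and $v_1=v_2=1$; from there your letter-by-letter matching and marker argument go through unchanged. You were right to flag the $x$-side/$y$-side bookkeeping as the main obstacle---that is exactly where the argument slipped. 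Your treatment of the wrap-around via the unique $(-,-)$ sign pattern is a clean way to handle what the paper passes over rather quickly.
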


\begin {proof}
The set is $E_n$ symmetrized by Remark \ref{relator2}. If $p$ is a piece corresponding to $p'$ there is a van Kampen diagram corresponding to $p = v_1 p' v_2$ (from the definition of the piece). Note that this diagram is like a $t$-cable (see proof of Theorem \ref{small}). Each element of the Baumslag-Solitar $g_i$ of $p$ is "carried" by the neigbouring $t$-bandes to the corresponding element $g'_i$ of $p'$ (see Figure \ref{fig:piece}).\\

\begin{figure}[h]
\centering
\includegraphics[scale=0.5]{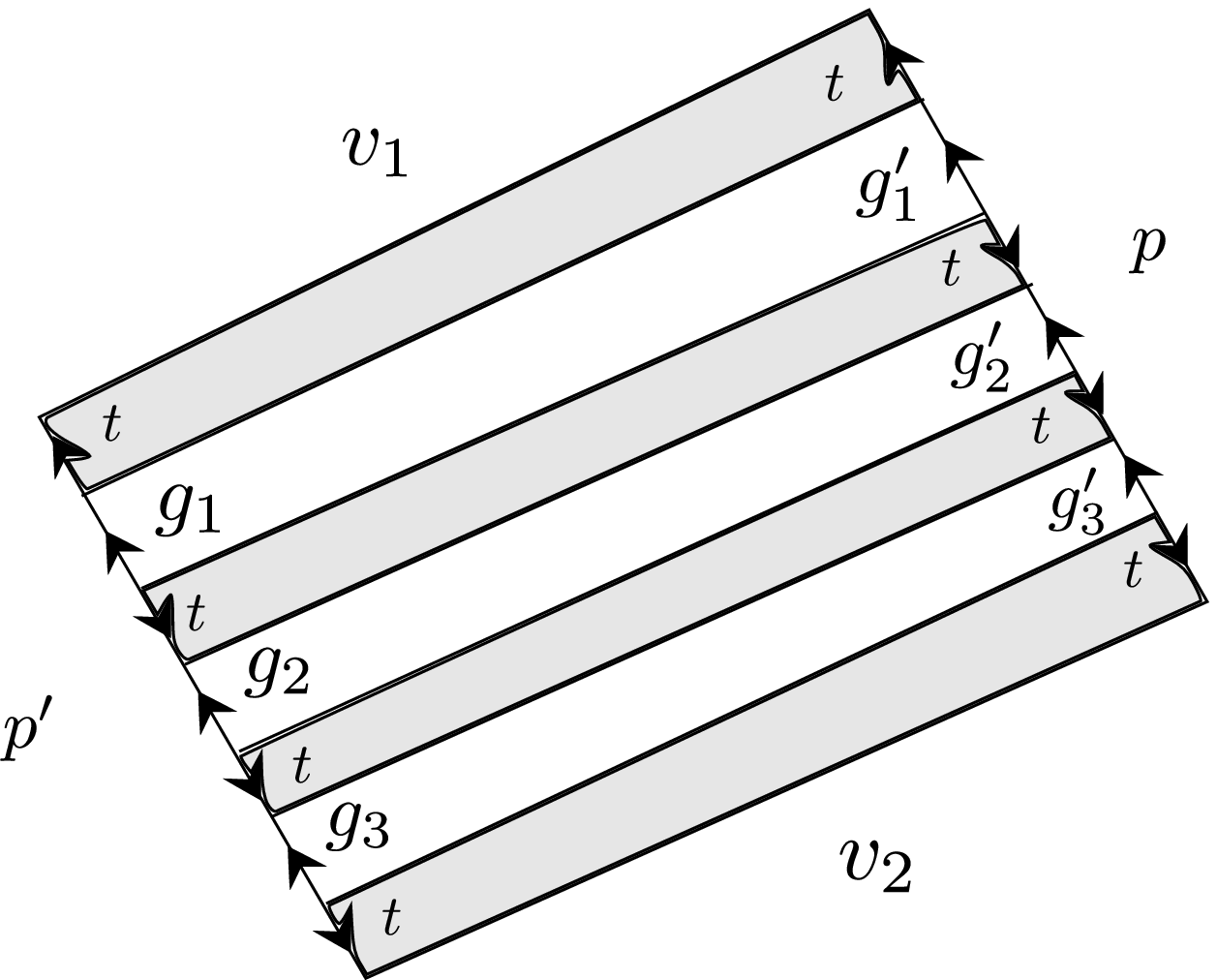}
\caption{}
\label{fig:piece}
\end{figure}

By Lemma \ref{solitar} $g_i=g'_i$ in the Baumslag-Solitar group
and the $t$-bands connecting them have length $0$ (there is a very limited number of possibilities for $g_i$, namely, it could be $x^{\pm 1}, x^{\pm 3}, x^{\pm 5}, x^{\pm 7}$ or $y^{\pm E_{n-1}}$). Therefore if the length of the piece is greater than $1$ then $v_1 = 1 = v_2 $ (in the notation of the Definition \ref{piece}).

Assume $p$ contains any of $x^{\pm 3}, x^{\pm 5}, x^{\pm 7}$. Condition (5) of Definition \ref{piece} and the fact that $v_1=1=v_2$ then implies that the only possibility is for $p$ and $p'$ to be from the relator and its inverse. We compare the relator, say, around $x^5$:

\begin{displaymath}
... ,t, x , t^{-1} , y^{E_{n-1}} , t, x^5  , t^{-1} , y^{-E_{n-1}}, t^1, x^{-1}, t^{-1}, ...
\end{displaymath}
to the inverse of the relator around $x^5$
\begin{displaymath}
... ,t, x ^{-1}, t^{-1} , y^{E_{n-1}} , t, x^5  , t^{-1} , y^{-E_{n-1}}, t^1, x, t^{-1}, ...
\end{displaymath}

Therefore, we see that if $p$ contains $x^5$ then $l_t(p) \leq 4$. We can proceed similarly for pieces containing $x^{\pm 3}, x^{- 5}, x^{\pm 7}$. If $p$ does not contain any of the $x^{\pm 3}, x^{\pm 5}, x^{\pm 7}$, then its length is similarly at most $4$ which is the length between these powers of $x$ except for $x^{-7}$ to $x^3$, where there is an extra $t^{-1}$ letter, which makes the pieces even shorter. The total length is $4*6+1$, so C'($\frac{1}{6}$,$E_n$) holds.

\end {proof}

Now we can prove Theorem \ref{second}.

\begin {proof}
Suppose for the sake of contradiction $x = \prod\limits_{i=1}^{N} g_i u_i^ {\pm 1} g_i^{-1}$, where the equality is in the free group, $N \leq E_{n-1}$ and $u_i$ are the relators from $\mu_n$. We want to rewrite this equality using $t^{-1}u_{n,1}$ instead of $t^{-1}u_n$.\\

We need to apply the relations of $\mu_0$ $2E_{n-1}$ times
to convert $t^{-1} y^{-E_{n-2}} x y^{E_{n-2}} t$ to $y^{E_{n-1}}$.
 Therefore, we need $24 \cdot 2E_{n-1}$ applications of the relations
to convert $u_{n,2}$ to $u_{n,1}$.
Similarly, we need $48 \cdot 2E_{n-2}$ applications
 to convert $u_{n,3}$ to $u_{n,2}$, etc.
 Since $E_{n-2}$ + $2E_{n-3} + 4E_{n-4}+...$ do not add up to more than $E_{n-1}$, we see that
we do need more than $96 \cdot E_{n-1}$ applications of the realtions
to convert $t^{-1}u_n$ to $t^{-1}u_{n,1}$.\\

There are at most $N$ such relators in the product. Therefore,
we need at most $96 \cdot E_{n-1} \cdot E_{n-1}$ relations of $\mu_0$
to convert all of them.
Since for $n>5$, $E_n > 96 \cdot E_{n-1} \cdot E_{n-1}$, we have $x = \prod\limits_{i=1}^{N'} g'_i (u'_i )^{\pm 1} (g'_i)^{-1}$, where the equality is in the free group, $N' \leq E_{n}$ and $u'_i$ are either from $\mu_0$ or are $t^{-1}u_{n,1}$. The theorem now follows from Lemma \ref{c6} and Theorem \ref {small}.
\end {proof}

We prove the main result of this section. Theorem \ref{main}.

\begin {proof}
Consider a sequence of presentations $\mu_i = \mu_i^{(0)}, \mu_i^{(1)}, \mu_i^{(2)}...$, where the last presentation is the empty presentation, and each step is an elementary Tietze transformation.
For some $n$ $\mu_i^{(n+1)}$ will be obtained from $\mu_i^{(n)}$ by applying $Op_5^{-1}$ to kill the generator $x$. $Area_{\mu_i^{(n)} } (x) = 1$, $Area_{\mu_i^{(0)} }(x) > E_{n-1}$ by Theorem \ref{second}. ${Op}_1$, ${Op}_1^{-1}$, ${Op}_2$, ${Op}_3$, ${Op}_5$, ${Op}_5^{-1}$ do not change the area of the word $x$. ${Op}_4$ can reduce the area by a factor
that cannot exceed $2$ for the following reason: Suppose $\mu_i^{(k+1)}  = ( \{x_1,...,x_r\} , \{a_1,...,a_{i-1}, a_ia_j, a_{i+1}, ..., a_p \} )$ is obtained from $\mu_i^{(k)} = ( \{x_1,...,x_r\} , \{a_1,...,a_{i-1}, a_i, a_{i+1}, ..., a_p \} )$ by applying ${Op}_4$. If it takes $K$ relators to show that $x=1$ in $\mu_i^{(k+1)}$, then in the worst case scenario all of them are $ a_ia_j$, and we will need $2K$ relators in $\mu_i^{(k)}$ to show that $x=1$. Therefore, $Area_{\mu_i^{(k)} }(x) \geq \frac {Area_{\mu_i^{(k+1)} }(x)}{2}$. So, we have $n > E_{n-2}$.

\end {proof}

To get from $\mu_i$ the $2$ relator presentations mentioned in the introduction we need to apply Tietze transformations to eliminate $y$ ($y = x^{t}$), the number of transformations needed grows linearly with the length of the relator $a_i$. The last claim from the introduction left to prove is that area of $t$ is also large. But that is clear because the area of $x$ can not be much larger than the area of $t$.\\

{\bf . Acknowledements.} I would like to thank my Ph.D. advisor Alexander
Nabutosky for introdicing me to the problems studied in this paper
and for numerous useful discussions.
This research had been partially supported
from his NSERC Accelerator and Discovery grants.

\newpage

\end{document}